\theoremstyle{plain}
\newtheorem{theorem}{Theorem}[section]
\newtheorem{lemma}[theorem]{Lemma}
\newtheorem{corollary}[theorem]{Corollary}
\newtheorem{remark}[theorem]{Remark}
\theoremstyle{definition}
\newtheorem{defn}[theorem]{Definition}
\newcommand{\s}{\vspace{0.3cm}}
\def\Aut{\operatorname{Aut}}
\def\PGL{\operatorname{PGL}}
\def\Gal{\operatorname{Gal}}
\begin{document}

\title{Riemann surfaces defined over the reals}
\author[Eslam Badr] {Eslam Badr}
\address{$\bullet$\,\,Eslam Badr}
\address{Departament Matem\`atiques, Edif. C, Universitat Aut\`onoma de Barcelona\\
08193 Bellaterra, Catalonia, Spain} \email{eslam@mat.uab.cat}
\address{Department of Mathematics,
Faculty of Science, Cairo University, Giza-Egypt}
\email{eslam@sci.cu.edu.eg}
\thanks{Eslam Badr is supported by MTM2016-75980-P}

\author{Rub\'en A. Hidalgo}
\author{Sa\'ul Quispe}
\address{$\bullet$\,\,Rub\'en A. Hidalgo and Sa\'ul Quispe}
\address{Departamento de Matem\'atica, Universidad de la Frontera, Casilla 54-D, Temuco, Chile}
\email{ruben.hidalgo@ufrontera.cl, saul.quispe@ufrontera.cl}

\thanks{R. A. Hidalgo and S. Quispe are partially supported by projects Fondecyt 1150003 and 3140050 and Proyecto Anillo CONICYT PIA ACT 1415}

\maketitle

\begin{abstract}
The known (explicit) examples of Riemann surfaces not definable over their field of moduli are not real whose field of moduli is a subfield of the reals. In this paper we provide explicit families of real Riemann surfaces which cannot be defined over the field of moduli.

\end{abstract}

\section{Introduction}\label{s1}
As a consequence of Riemann-Roch's theorem, each Riemann surface $\mathcal{S}$ of genus $g$ can be defined by an irreducible complex projective curve $\mathcal{C}$. A subfield $K$ of ${\mathbb C}$ is called a {\em field of definition} for $\mathcal{S}$ if it is possible to assume $\mathcal{C}$ to be defined by polynomials with coefficients in $K$. By results due to Koizumi \cite{Koi}, the intersection of all fields of definition of $\mathcal{S}$ is the {\em field of moduli} of $\mathcal{S}$ and there is a field of definition being a finite extension of the field of moduli. The surface $\mathcal{S}$ is called real if ${\mathbb R}$ is a field of definition of it; this is equivalent for $\mathcal{S}$ to admit an anticonformal automorphism of order two (as a consequence of Weil's descent theorem \cite{We}). Also, the field of moduli of $\mathcal{S}$ is a subfield of ${\mathbb R}$ if and only if it is isomorphic to its complex conjugate, equivalently, if it admits anticonformal automorphisms \cite{Earle, Shimura, Silhol}. Those surfaces of genus $g$ with real field of moduli corresponds to the real points of the moduli space ${\mathcal M}_{g}$. Riemann surfaces whose field of moduli is real but are not real are usually called {\it pseudo-real}.

It is well known that every Riemann surface of genus at most one can be defined over its field of moduli. So we assume, from now on, that $g \geq 2$. In this case, when ${\rm Aut}(\mathcal{S})$ is the trivial group or when $\mathcal{S}/{\rm Aut}(\mathcal{S})$ has genus zero and exactly $3$ cone points (that is, $\mathcal{S}$ is a quasiplatonic curve),
then $\mathcal{S}$ can be defined over its field of moduli (the first as a consequence of Weil's descent theorem \cite{We} and the second was proved by Wolfart \cite{Wolfart}).

Explicit examples of hyperelliptic Riemann surfaces which cannot be defined over the field of moduli were first provided by Earle \cite{Earle}, Shimura \cite{Shimura} and later by Huggins \cite{Hu1,Hu2}. In the non-hyperelliptic case, explicit examples were provided by the second author \cite{Hid}, Kontogeorgis \cite{Kontogeorgis} and the second and third author together with Artebani and Carvacho \cite{Aq, Achq}.
All of these examples are pseudo-real ones and, moreover, they can be defined over an imaginary extension of degree two of the field of moduli.

Because of the above examples, we were wondering if every real Riemann surface can be defined over its field of moduli.
In this paper we provide explicit examples of real Riemann surfaces (hyperelliptic and non-hyperelliptic) which are not definable over their field of moduli.

\subsection*{Notations and conventions}
By $L_D$ we mean the quadratic number field extension
$$\mathbb{Q}(\sqrt{D})=\{a+b\sqrt{D},\ a, b\in {\mathbb Q}\},$$
where $D>1$ is a square-free integer. Assume further that Pell's equation $a^2-Db^2=-1$ has a solution in $\mathbb{Z}^2$ (for instance, $D=2$). In particular, if $\sigma$ generates $\operatorname{Gal}(L_D/\mathbb{Q})$, then $L_D$ would contain infinitely many points $\eta=a+b\sqrt{D}$ whose norm $N_{L_D/\mathbb{Q}}(\eta)$ equals to $-1$, or equivalently $^{\sigma}\eta=-\eta^{-1}$.

We mainly are interested in the following two cases (i) $L/K$ is a finite Galois extension inside $\mathbb{C}$ and (ii) $K={\mathbb Q}$ and $L={\mathbb C}$.
The Galois group for $L/K$ is $\Gal(L/K)$, where its action will be denoted by left exponentiation. In particular, if $F\in L[X_0,\cdots,X_n]$ and $\sigma\in \Gal(L/K)$, then $^{\sigma}F$ denotes the  polynomial obtained by applying $\sigma$ to the coefficients of $F$.

The $n$-dimensional projective space over the complex field is $\mathbb{P}^{n}_{\mathbb{C}}$, and its automorphism group is $\operatorname{PGL}_{n+1}(\mathbb{C})$, the $(n+1)$-dimensional projective general linear group.
A projective linear transformation $A=(a_{i,j})$ of $\mathbb{P}^2_{\mathbb{C}}$ is often written as $[a_{1,1}X+a_{1,2}Y+a_{1,3}Z:a_{2,1}X+a_{2,2}Y+a_{2,3}Z:a_{3,1}X+a_{3,2}Y+a_{3,3}Z],$
where $\{X,Y,Z\}$ are the homogenous coordinates of $\mathbb{P}^2_{\mathbb{C}}$. The subgroup of all elements of $\operatorname{PGL}_3(\mathbb{C})$ of the shape
$$\left(
    \begin{array}{ccc}
      \ast & 0 & \ast \\
      0 & 1 & 0 \\
      \ast & 0 & \ast \\
    \end{array}
  \right)
$$
will be denoted by $\operatorname{GL}_{2,Y}(\mathbb{C})$.
We use $\zeta_n$ for a fixed primitive $n$-th root of unity in $\mathbb{C}$.

\subsection*{Acknowledgment}
The authors would like to thank Francesc Bars of Universitat Aut\`onoma de Barcelona, for his careful reading of an early version of this paper.

\section{Weil's criterion of descent}\label{s2}

Let $\mathcal{C}$ be a smooth projective algebraic curve defined over the field $L$, that is, $\mathcal{C}$ is defined as the zero locus of the homogeneous polynomials $F_1,\cdots,F_s\in L[X_0,\cdots,X_n]$.

For each $\sigma\in \Gal(L/K)$ the new polynomials $^{\sigma}F_1,\cdots,^{\sigma}F_s$ define a smooth projective curve $^{\sigma}\mathcal{C}$. In general, it may be that $^{\sigma}\mathcal{C}$ and $\mathcal{C}$ are not isomorphic curves.
\begin{defn}
The \emph{field of moduli} of $\mathcal{C}$ relative to the extension $L/K$, denoted by $M_{L/K}(\mathcal{C})$, is the fixed subfield of $L$ by the group
 $$U_{L/K}(\mathcal{C})=\{\sigma\in \Gal(L/K):\ \mathcal{C}\  \textrm{is isomorphic to }^{\sigma}\mathcal{C}\ \textrm{over } L\}.$$
\end{defn}

\begin{remark}
If $K={\mathbb Q}$ and $L={\mathbb C}$, then $M_{{\mathbb C}/{\mathbb Q}}(\mathcal{C})=M_{\overline{\mathbb Q}/{\mathbb Q}}(\mathcal{C})$ and this is the intersection of all fields of definition of $\mathcal{C}$ \cite{Koi}. This intersection property may fail, for instance, when (i) $K={\mathbb Q}$ and $L=\overline{\mathbb Q}$ or (ii) $K={\mathbb R}$ and $L={\mathbb C}$.
\end{remark}
Let $L/K$ is a finite Galois extension and assume there exists an isomorphism $g:\mathcal{C}'\to \mathcal{C}$, defined over $L$, with $\mathcal{C}'$ defined over $K$. Then, (i) for each $\sigma\in \Gal(L/K)$, the rational map $f_{\sigma}:=g\circ\,(^{\sigma}g)^{-1}:\,^{\sigma}\mathcal{C}\to\mathcal{C}$ is an isomorphism defined over $L$, (ii) $f_{\sigma\tau}=f_{\sigma}\circ\, ^{\sigma}f_{\tau}$ holds for all $\sigma,\tau\in \Gal(L/K)$ and (iii) $f_{\sigma}\circ\,^{\sigma}g = g$.
The following theorem due to A. Weil shows that the above necessary conditions (i)-(iii) is also sufficient for the field $K$ to be a field of definition for $\mathcal{C}$.

\s
\noindent
\begin{theorem}[Weil's descent theorem \cite{We}]\label{Wa}
Let us assume that $L/K$ be a finite Galois extension and let
$\mathcal{C}$ be an irreducible projective algebraic curve, defined over $L$.
If for every $\sigma\in \Gal(L/K)$ there is an isomorphism $f_{\sigma}:\, ^{\sigma}\mathcal{C}\rightarrow \mathcal{C}$, defined over $L$, such that  the Weil's co-cycle
condition $f_{\sigma\tau} = f_{\sigma}\circ\,^{\sigma}f_{\tau}$ holds for all $\sigma, \tau\in \Gal(L/K)$,
then there exist an irreducible projective algebraic curve $\mathcal{C}'$ defined
over $K$ and an isomorphism $g: \mathcal{C}'\rightarrow \mathcal{C}$ defined over $L$ such that $f_{\sigma}\circ\,^{\sigma}g = g$. We say that the collection $\{f_{\sigma}\}_{\sigma \in \Gal(F/K)}$ is a Weil's datum for $\mathcal{C}$ with respect to the Galois extension $L/K$.
\end{theorem}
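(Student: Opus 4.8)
The plan is to descend the \emph{function field} of $\mathcal{C}$ rather than its defining equations, by means of Galois descent for fields (Artin). Throughout we take $\mathcal{C}$ to be the smooth projective model of its (geometrically irreducible) function field $L(\mathcal{C})$, as in the opening of this section; the construction below is birationally intrinsic. For each $\sigma\in\Gal(L/K)$ there is a canonical $\sigma$-semilinear ring isomorphism $c_{\sigma}\colon L(\mathcal{C})\to L({}^{\sigma}\mathcal{C})$ (``apply $\sigma$ to the coefficients'', well defined on ratios of homogeneous forms), and the given isomorphism $f_{\sigma}\colon{}^{\sigma}\mathcal{C}\to\mathcal{C}$ induces an $L$-algebra isomorphism $f_{\sigma}^{*}\colon L(\mathcal{C})\to L({}^{\sigma}\mathcal{C})$. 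Set $\phi_{\sigma}:=(f_{\sigma}^{*})^{-1}\circ c_{\sigma}\colon L(\mathcal{C})\to L(\mathcal{C})$; this is a field automorphism of $L(\mathcal{C})$ with $\phi_{\sigma}|_{L}=\sigma$. A direct, if somewhat delicate, manipulation of the semilinear identifications shows that Weil's co-cycle condition $f_{\sigma\tau}=f_{\sigma}\circ{}^{\sigma}f_{\tau}$ is \emph{equivalent} to the relation $\phi_{\sigma}\circ\phi_{\tau}=\phi_{\sigma\tau}$; thus $\sigma\mapsto\phi_{\sigma}$ is a faithful action of $\Gal(L/K)$ on $L(\mathcal{C})$ by field automorphisms, lifting the Galois action on $L$.

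Next I would pass to fixed fields. Put $F:=L(\mathcal{C})^{\{\phi_{\sigma}\}}$. By Artin's theorem, $L(\mathcal{C})/F$ is a finite Galois extension whose group is canonically $\Gal(L/K)$; hence $L\cap F=K$ inside $L(\mathcal{C})$, the fields $F$ and $L$ are linearly disjoint over $K$, and the natural map $F\otimes_{K}L\to L(\mathcal{C})$ is an isomorphism. Since $L(\mathcal{C})$ is a finitely generated extension of $K$ of transcendence degree one and $F$ is an intermediate field, $F/K$ is itself finitely generated of transcendence degree one; moreover $F\otimes_{K}\overline{K}\cong L(\mathcal{C})\otimes_{L}\overline{K}$ is a domain (as $\mathcal{C}$ is geometrically irreducible, we work in characteristic zero, and $L(\mathcal{C})/L$ is regular), so $F$ is a regular extension of $K$. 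By the standard dictionary between function fields and curves, $F=K(\mathcal{C}')$ for a smooth projective, geometrically irreducible curve $\mathcal{C}'$ over $K$, unique up to isomorphism.

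It remains to produce $g$ and check the descent identity. Base-changing, $\mathcal{C}'\times_{K}L$ is a smooth projective curve over $L$ with function field $F\otimes_{K}L\cong L(\mathcal{C})$, so there is an $L$-isomorphism $g\colon\mathcal{C}'\times_{K}L\to\mathcal{C}$ (which we write $g\colon\mathcal{C}'\to\mathcal{C}$, defined over $L$) whose comorphism $g^{*}$ realizes the chosen identification $L(\mathcal{C})\cong F\otimes_{K}L$. By construction $g^{*}$ conjugates $\phi_{\sigma}$ into $1\otimes\sigma$ on $F\otimes_{K}L$; since $1\otimes\sigma$ is exactly the ``apply $\sigma$ to the coefficients'' operation on the function field of the $K$-curve $\mathcal{C}'$, unwinding the definition $\phi_{\sigma}=(f_{\sigma}^{*})^{-1}\circ c_{\sigma}$ together with the fact that ${}^{\sigma}g$ is obtained from $g$ by $\sigma$-conjugating its coefficients yields precisely $f_{\sigma}\circ{}^{\sigma}g=g$ for all $\sigma\in\Gal(L/K)$. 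This proves the theorem, and $\{f_{\sigma}\}$ is a Weil datum by definition.

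The one genuinely technical point is the equivalence asserted in the first paragraph: one must keep scrupulous track of which maps are $L$-linear and which are $\sigma$-semilinear in order to see that the co-cycle condition is exactly the statement that $\sigma\mapsto\phi_{\sigma}$ is a homomorphism; once that is in hand, everything is a formal consequence of Galois descent for fields. Alternatively, one can run a more concrete argument: fix a $\Gal(L/K)$-intrinsic projective embedding of $\mathcal{C}$ (for instance a pluricanonical one, available when $g\geq 2$), so that each $f_{\sigma}$ becomes an element of $\PGL_{N}(L)$; lift the resulting semilinear action to the space of pluricanonical forms and split it using the generalized Hilbert~90 theorem $H^{1}(\Gal(L/K),\operatorname{GL}_{N}(L))=1$; this descends the graded coordinate ring, hence $\mathcal{C}$ and its homogeneous ideal, to $K$. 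The use of an \emph{intrinsic} (hence automatically $\Gal(L/K)$-stable) polarization is what makes the Brauer-class obstruction that appears in a naive $\PGL$-cocycle argument disappear.
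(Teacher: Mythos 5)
Your argument is essentially correct, but it is a genuinely different route from the paper's: the paper does not prove Theorem \ref{Wa} at all, it quotes the statement from Weil \cite{We} and, for what it actually needs later, leans on the constructive proof in \cite{HR}. Your proof — converting the cocycle $\{f_{\sigma}\}$ into the semilinear automorphisms $\phi_{\sigma}=(f_{\sigma}^{*})^{-1}\circ c_{\sigma}$ of $L(\mathcal{C})$ (and your claim that $\phi_{\sigma}\circ\phi_{\tau}=\phi_{\sigma\tau}$ is equivalent to $f_{\sigma\tau}=f_{\sigma}\circ{}^{\sigma}f_{\tau}$ does check out with the left-action conventions), taking the Artin fixed field $F$, and recognizing $F=K(\mathcal{C}')$ for a smooth projective geometrically irreducible $K$-curve, with $F\cap L=K$, $F\otimes_{K}L\cong L(\mathcal{C})$ and $F/K$ regular all following in characteristic zero from geometric irreducibility — is clean and self-contained, but it is specific to curves and to smooth (or at least normal) projective models: the function field determines $\mathcal{C}$ only up to birational equivalence, so to obtain an isomorphism $g:\mathcal{C}'\to\mathcal{C}$ rather than a birational map you are implicitly using the smoothness standing hypothesis from the beginning of Section \ref{s2}, whereas Weil's theorem (and the remark following Corollary \ref{coro1}, which the paper wants for arbitrary varieties with finite automorphism group) is not confined to dimension one; your Hilbert-90/pluricanonical sketch is the natural way to go beyond that, and is closer in spirit to the coordinate-based arguments of \cite{We,HR}. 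The other trade-off worth naming: the feature of the constructive proof that the paper explicitly exploits in Corollary \ref{coro1} is that if every $f_{\sigma}$ is defined over an intermediate field $N$ with $K<N<L$, then $g$ may be taken defined over $N$; your construction produces $g$ over $L$ and, as written, does not record this refinement (it can be extracted from the same fixed-field computation, but you would have to say how), so if your proof were to replace the citation it would need that supplement to support the paper's application.
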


\s
\noindent
\begin{remark}
The above result still valid for the case $K={\mathbb Q}$ and $L={\mathbb C}$ if we assume $\mathcal{C}$ to be of genus $g \geq 2$ (so it has a finite group of automorphisms).
\end{remark}

\s
\subsection{An application}
A constructive proof of Theorem \ref{Wa} can be found in \cite{HR}.  The constructive proof in fact asserts that if every $f_{\sigma}$ is defined over a subfield $N$, $K < N <L$, then $g$ is also defined over $N$. This observation will be important for our construction.

\s
\noindent
\begin{corollary}\label{coro1}
Let $L/K$ be a Galois extension of degree $2$ and let $\sigma$ be  the generator of $\Gal(L/K)\cong {\mathbb Z}/2{\mathbb Z}$.
Let $\mathcal{C}$ be an irreducible projective algebraic curve of genus $g \geq 2$, defined over $L$. Assume that every automorphism of $\mathcal{C}$ is also defined over $L$ and that there is an isomorphism $h:\,^{\sigma}\mathcal{C} \to \mathcal{C}$ also defined over $L$. Then, if there is no a Weil's descent datum for $\mathcal{C}$, with respect to the Galois extension $L/K$, then $\mathcal{C}$ cannot be defined over $K$.
\end{corollary}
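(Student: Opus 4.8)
The statement is the contrapositive of the easy direction of Weil's descent theorem, so the plan is to argue by contradiction: assuming that $\mathcal{C}$ \emph{can} be defined over $K$, I will produce a Weil datum for $\mathcal{C}$ relative to $L/K$. First I would record what such a datum amounts to. Writing $\Gal(L/K)=\{\mathrm{id},\sigma\}$ with $\sigma^{2}=\mathrm{id}$, the cocycle relation applied to the pair $(\mathrm{id},\mathrm{id})$ forces $f_{\mathrm{id}}=\mathrm{id}_{\mathcal{C}}$, and the only surviving instance is $f_{\sigma}\circ\,{}^{\sigma}f_{\sigma}=\mathrm{id}_{\mathcal{C}}$; so a Weil datum is nothing but an isomorphism $f_{\sigma}\colon{}^{\sigma}\mathcal{C}\to\mathcal{C}$ defined over $L$ satisfying $f_{\sigma}\circ\,{}^{\sigma}f_{\sigma}=\mathrm{id}_{\mathcal{C}}$. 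Since $h\colon{}^{\sigma}\mathcal{C}\to\mathcal{C}$ exists over $L$ and $\Aut(\mathcal{C})$ is defined over $L$, every such $f_{\sigma}$ has the form $\phi\circ h$ with $\phi\in\Aut(\mathcal{C})$; thus ``no Weil datum'' says precisely that $(\phi h)\circ\,{}^{\sigma}(\phi h)\neq\mathrm{id}_{\mathcal{C}}$ for all $\phi\in\Aut(\mathcal{C})$.

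Now suppose $\mathcal{C}$ is defined over $K$: there is a curve $\mathcal{C}'$ over $K$ and an isomorphism $g\colon\mathcal{C}'\to\mathcal{C}$. The crux is that $g$ may be taken defined over $L$; granting this, I would set $f_{\mathrm{id}}:=\mathrm{id}_{\mathcal{C}}$ and $f_{\sigma}:=g\circ({}^{\sigma}g)^{-1}\colon{}^{\sigma}\mathcal{C}\to\mathcal{C}$, which are defined over $L$ because $g$, $\mathcal{C}$ and $\mathcal{C}'$ are and $\sigma$ has order two on $L$ (so ${}^{\sigma^{2}}\mathcal{C}=\mathcal{C}$ and ${}^{\sigma^{2}}g=g$). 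Exactly as in conditions (i)--(iii) preceding Theorem \ref{Wa}, this is a Weil datum: the one relation to check is $f_{\sigma}\circ\,{}^{\sigma}f_{\sigma}=\mathrm{id}_{\mathcal{C}}$, and since ${}^{\sigma}f_{\sigma}={}^{\sigma}g\circ g^{-1}$ one gets $f_{\sigma}\circ\,{}^{\sigma}f_{\sigma}=g\circ({}^{\sigma}g)^{-1}\circ{}^{\sigma}g\circ g^{-1}=\mathrm{id}_{\mathcal{C}}$. This contradicts the hypothesis, so $\mathcal{C}$ is not definable over $K$.

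The step I expect to be the main obstacle is justifying that $g$ can be taken over $L$. When $L$ is algebraically closed — the case of interest in this paper, $L=\mathbb{C}$ and $K=\mathbb{R}$ — there is nothing to do, since then any isomorphism of the relevant curves is automatically defined over $L$. In general I would first realise $g$ over a finite Galois extension $M/K$ with $L\subseteq M$, form the Weil datum $\{f_{\tau}:=g\circ({}^{\tau}g)^{-1}\}_{\tau\in\Gal(M/K)}$, and note that for $\tau\in H:=\Gal(M/L)$ one has ${}^{\tau}\mathcal{C}=\mathcal{C}$ and $f_{\tau}\in\Aut(\mathcal{C})$; as $\Aut(\mathcal{C})$ is defined over $L$, the group $H$ acts trivially on it and $\tau\mapsto f_{\tau}$ is a homomorphism $H\to\Aut(\mathcal{C})$. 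Applying Theorem \ref{Wa} to $\mathcal{C}$ over $M/L$ — together with the fact that $\mathcal{C}$ is already defined over $L$ and the refinement of \cite{HR} controlling the field of definition of the descended model — one can modify the $K$-model so that $f_{\tau}=\mathrm{id}_{\mathcal{C}}$ for all $\tau\in H$; then for $\rho\in\Gal(L/K)$ the value $f_{\widetilde{\rho}}$ is independent of the lift $\widetilde{\rho}\in\Gal(M/K)$, is fixed by $H$ hence defined over $L$ (using that $H$ is normal), and these $f_{\rho}$ form a Weil datum relative to $L/K$. Making this reduction precise — in effect, showing that $K$-definability of $\mathcal{C}$ is witnessed by a model whose base change to $L$ is $L$-isomorphic to $\mathcal{C}$ — is exactly where the two standing hypotheses (that $\Aut(\mathcal{C})$ and $h$ be defined over $L$) are used.
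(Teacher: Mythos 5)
Your skeleton agrees with the paper's: reduce the Weil datum for a quadratic extension to the single relation $f_{\sigma}\circ{}^{\sigma}f_{\sigma}=\mathrm{id}$, and observe that once a $K$-model $\mathcal{C}''$ admits an isomorphism $g:\mathcal{C}''\to\mathcal{C}$ \emph{defined over $L$}, the coboundary $g\circ({}^{\sigma}g)^{-1}$ is such a datum. You have also correctly isolated the crux (producing $g$ over $L$), and the case $L=\mathbb{C}$, $K=\mathbb{R}$ is indeed immediate. The gap is in your treatment of the general quadratic extension, which is the case the corollary is really about (e.g.\ $L_D/\mathbb{Q}$). The step ``one can modify the $K$-model so that $f_{\tau}=\mathrm{id}$ for all $\tau\in H$'' is literally equivalent to the existence of a $K$-model that is $L$-isomorphic to $\mathcal{C}$, i.e.\ to the statement being proved, so as written it is circular; and it does not follow from Theorem \ref{Wa} applied to $\{f_{\tau}\}_{\tau\in H}$ over $M/L$. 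That application only yields the twist of $\mathcal{C}$ by the homomorphism $H\to\Aut(\mathcal{C})$, an $L$-curve which need not be $L$-isomorphic to $\mathcal{C}$; equivalently, a cocycle with values in $\Aut(\mathcal{C})$ all defined over $L$ need not be split by an isomorphism defined over $L$, so the refinement of \cite{HR} cannot be invoked with the ``intermediate'' field equal to the base field $L$ of the extension $M/L$. A concrete obstruction is the quadratic-twist phenomenon: take $L=\mathbb{Q}(\sqrt{2})$, $c=\sqrt{2}-2$, and $\mathcal{C}:cy^{2}=p(x)$ with $p\in\mathbb{Q}[x]$ generic; then ${}^{\sigma}c/c=(1+\sqrt{2})^{2}$, so $h$ exists over $L$ and all automorphisms are defined over $L$, yet every isomorphism ${}^{\sigma}\mathcal{C}\to\mathcal{C}$ is $(x,y)\mapsto(x,\pm(1+\sqrt{2})y)$ and composes with its conjugate to the hyperelliptic involution (norm $-1$), so no trivialization over $L$ is possible even though $\mathcal{C}$ is $\mathbb{C}$-isomorphic to the $\mathbb{Q}$-curve $y^{2}=p(x)$. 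Hence your auxiliary claim cannot be derived from the stated hypotheses alone (and this example shows that the whole statement has to be handled with care unless ``defined over $K$'' is read up to $L$-isomorphism).

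For comparison, the paper does not restrict the datum to $H$ and does not attempt any trivialization: it forms the full collection $\{f_{\eta}\}_{\eta\in\Gal(M/K)}$, notes that each $f_{\eta}$ is either an automorphism or an automorphism composed with $h$, hence defined over $L$, and then appeals to the constructive proof of Weil's theorem in \cite{HR} together with its refinement that when \emph{all} the $f_{\eta}$ are defined over a field $N$ strictly intermediate between $K$ and $M$ (here $N=L$), the descended isomorphism $g$ may be taken defined over $N$; the $L/K$ datum is then $g\circ({}^{\sigma}g)^{-1}$. Your route does not reduce to this: by passing to the subgroup $H$ you place yourself exactly in the situation (intermediate field equal to the base) where such a refinement fails, as the twist example shows. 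To complete your argument you would need either to quote and verify the intermediate-field refinement of \cite{HR} in the form the paper uses it, applied to the full $\Gal(M/K)$-datum, or to supply an independent argument for the trivialization step; neither is done in the proposal.
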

\begin{proof}
Let us assume $\mathcal{C}$ is definable over $K$. It follows that there is an isomorphism $t:\mathcal{C}' \to \mathcal{C}$, where $\mathcal{C}'$ is defined over $K$. As the group of automorphisms of $\mathcal{C}$ is finite, the isomorphism $t$ can be assumed to be defined over a finite extension $M$ of $L$; which can be assume to be a Galois extension of $K$.

If $\eta \in {\rm Gal}(M/K)$, then the isomorphism $f_{\eta}:=t \circ\, (^{\eta}t)^{-1}:\,^{\eta}\mathcal{C} \to \mathcal{C}$ must be of the form $a_{\eta} \circ h $, where $a_{\eta}$ is an automorphism of $\mathcal{C}$; it follows that $f_{\eta}$ is defined over $L$. Moreover, it can be seen that the collection $\{f_{\eta}: \eta \in {\rm Gal}(M/K)\}$ is a Weil's datum for $\mathcal{C}$ with respect to the finite Galois extension $M/K$. Using this collection of isomorphisms in Weil's descent theorem and its constructive proof in \cite{HR}, we obtain an isomorphism $g:\mathcal{C}'' \to \mathcal{C}$, where $\mathcal{C}''$ is defined over $K$ and $g$ is defined over $L$.

If ${\rm Gal}(L/K)=\langle \sigma \rangle$, then $\{t_{\sigma}=g \circ\, (^{\sigma}g)^{-1} , t_{e}=I\}$ defined a Weil's datum for $\mathcal{C}$ with respect to the Galois extension $L/K$, a contradiction.
\end{proof}

\noindent
\begin{remark}
The above corollary works for $\mathcal{C}$ any algebraic variety with a finite group of automorphisms.
\end{remark}

\s

\section{Hyperelliptic real curves not definable over their field of moduli}\label{s3}
In this section we provide examples of hyperelliptic curves, over a real quadratic field extension, not definable over their field of moduli.

Fix an even integer $g\geq2$, and choose $\eta_i$, for $1\leq i\leq g-1$, in $$L_D\setminus\big(\mathbb{Q}\cup\{\pm\sqrt{D},\frac{\pm1}{\sqrt{D}}\}\big)$$ with the property that $N_{L_D/\mathbb{Q}}(\eta_i)=-1$, and such that $\eta_i\neq\frac{\pm1}{\eta_j}$ for any $i,j$.
Now, for $t\in\mathbb{Q}\setminus\{0,\pm1\}$, consider the family of Riemann surfaces $\mathcal{C}_{t,g}$ defined in $\mathbb{P}^2_{\mathbb{C}}$ by an equation of the form
$$Y^2Z^{2g}-(X+tZ)(X+t^{-1}Z)(X+\sqrt{D}Z)(X-\frac{1}{\sqrt{D}}Z)\prod_{i=1}^{g-1}(X^2-\eta_i^2Z^2).$$
It is clear that $\mathcal{C}_{t,g}$ is hyperelliptic of genus $g$, with hyperelliptic involution $\iota:(X:Y:Z)\mapsto(X:-Y:Z).$
\begin{theorem}\label{familysmoothauto2}
Following the above notations, there exist infinitely many $t\in\mathbb{Q}\setminus\{0,\pm1\}$ such that $\mathcal{C}_{t,g}$ has automorphism group $$\operatorname{Aut}(\mathcal{C}_{t,g})=\langle\iota\rangle\simeq\mathbb{Z}/2\mathbb{Z}.$$
Moreover, the field of moduli is $\mathbb{Q}$, but it is not a field of definition.
\end{theorem}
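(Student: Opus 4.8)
The plan is to split the statement into its two parts: (a) for infinitely many $t \in \mathbb{Q}\setminus\{0,\pm1\}$ the automorphism group of $\mathcal{C}_{t,g}$ is exactly $\langle \iota\rangle$; and (b) for all such $t$, the field of moduli is $\mathbb{Q}$ but $\mathbb{Q}$ is not a field of definition. For part (a), since $\mathcal{C}_{t,g}$ is hyperelliptic, $\iota$ is central and $\operatorname{Aut}(\mathcal{C}_{t,g})/\langle\iota\rangle$ embeds into $\operatorname{PGL}_2(\mathbb{C})$ as the group of Möbius transformations permuting the $2g+2$ branch points; here the branch points are $\infty$ together with the roots $\{-t,-t^{-1},-\sqrt{D},1/\sqrt{D}\}\cup\{\pm\eta_i\}$ of the degree-$(2g+2)$ polynomial. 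I would argue that for a generic choice of $t$ this set of $2g+2$ points admits no nontrivial Möbius symmetry: the cross-ratios involving $t$ vary with $t$, so only finitely many $t$ can produce a coincidence forcing an extra automorphism, hence infinitely many $t$ give $\operatorname{Aut} = \langle\iota\rangle$. (One must also use the hypotheses $\eta_i \neq \pm 1/\eta_j$ and $\eta_i \notin \{\pm\sqrt D, \pm 1/\sqrt D\}$ to rule out structural symmetries independent of $t$.)

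For part (b), the key players are the automorphism $\tau(X:Y:Z) = (X : i\,\lambda\, Y : -Z)$ composed with suitable scalings. I would write $\sigma$ for the generator of $\operatorname{Gal}(L_D/\mathbb{Q})$ and look for an isomorphism $h : {}^{\sigma}\mathcal{C}_{t,g} \to \mathcal{C}_{t,g}$ defined over $L_D$. Applying $\sigma$ to the defining equation replaces $\sqrt{D}$ by $-\sqrt{D}$, so the branch set $\{-t,-t^{-1},-\sqrt D, 1/\sqrt D, \pm\eta_i\}$ becomes $\{-t,-t^{-1},\sqrt D, -1/\sqrt D, \pm({}^{\sigma}\eta_i)\}$. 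Using the norm condition ${}^{\sigma}\eta_i = -\eta_i^{-1}$ and $N(\sqrt D) $-type relations, I would check that the Möbius map $x \mapsto -1/x$ (which sends $-\sqrt D \mapsto 1/\sqrt D$, $1/\sqrt D \mapsto -\sqrt D$, $\eta_i \mapsto -1/\eta_i = {}^{\sigma}\eta_i$, and fixes $\{-t,-t^{-1}\}$ as a set since $(-t)(-t^{-1})=1$) carries the $\sigma$-conjugated branch set to the original one; this lifts to an isomorphism $h : {}^{\sigma}\mathcal{C}_{t,g} \to \mathcal{C}_{t,g}$ over $L_D$, of the shape $[Z : cY : X]$ for an appropriate constant $c \in L_D$ absorbing the discrepancy in the leading coefficients of the two degree-$(2g+2)$ polynomials. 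Since $U_{\mathbb{C}/\mathbb{Q}}$ is then all of $\operatorname{Gal}$ fixing $\mathbb{Q}$, the field of moduli is $\mathbb{Q}$.

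It then remains to show $\mathbb{Q}$ is not a field of definition; here I invoke Corollary~\ref{coro1} with $L = L_D$, $K = \mathbb{Q}$: all automorphisms of $\mathcal{C}_{t,g}$ (just $1$ and $\iota$) are defined over $\mathbb{Q}\subset L_D$, and $h$ above is defined over $L_D$, so it suffices to prove there is \emph{no} Weil descent datum $\{f_{\sigma}, f_e = \mathrm{id}\}$ for the extension $L_D/\mathbb{Q}$. Since $\operatorname{Aut}(\mathcal{C}_{t,g}) = \langle\iota\rangle$, the only candidates are $f_{\sigma} = h$ and $f_{\sigma} = \iota\circ h$; the cocycle condition $f_{\sigma\sigma} = f_e = f_{\sigma}\circ {}^{\sigma}f_{\sigma}$ reduces to checking whether $h \circ {}^{\sigma}h = \mathrm{id}$ or $h\circ{}^{\sigma}h = \iota$ can hold. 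Computing $h\circ{}^{\sigma}h$ explicitly as an element of $\langle\iota\rangle$ — it will be $\iota^{k}$ for a $k$ determined by the constant $c$ and its conjugate ${}^{\sigma}c$, essentially governed by whether $c\cdot{}^{\sigma}c$ equals $+1$ or $-1$ after accounting for the sign flip coming from $x \mapsto -1/x$ being an involution with a $(-1)$ in its matrix square — I expect to find $h\circ{}^{\sigma}h = \iota \neq \mathrm{id}$, and moreover $(\iota h)\circ{}^{\sigma}(\iota h) = \iota\,(h\circ{}^{\sigma}h) = \mathrm{id}$ is also obstructed by a sign, so neither choice satisfies the cocycle condition. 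This sign computation — pinning down $c\cdot{}^{\sigma}c$ and the contribution of the involution's matrix — is the main obstacle and the technical heart of the argument; it is exactly where the norm hypothesis $N_{L_D/\mathbb{Q}}(\eta_i) = -1$ and the solvability of the Pell equation $a^2 - Db^2 = -1$ are forced to enter, since they control the relevant norms being $-1$ rather than $+1$. Once the obstruction is confirmed, Corollary~\ref{coro1} gives that $\mathcal{C}_{t,g}$ is not definable over $\mathbb{Q}$, completing the proof.
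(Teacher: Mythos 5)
Your overall strategy coincides with the paper's: a genericity argument to kill extra automorphisms (this part matches the paper's proof and is at the same level of rigor), then an explicit isomorphism $h:{}^{\sigma}\mathcal{C}_{t,g}\to\mathcal{C}_{t,g}$ over $L_D$ to get field of moduli $\mathbb{Q}$, and finally the failure of the Weil cocycle condition for both candidates $h$ and $\iota\circ h$, fed into Corollary~\ref{coro1}. However, there are two genuine problems in part (b). First, your Möbius transformation is wrong: $x\mapsto -1/x$ does \emph{not} carry the $\sigma$-conjugated branch set to the original one. It sends $-t\mapsto 1/t$, which is not a branch point (your claim that it ``fixes $\{-t,-t^{-1}\}$ as a set since $(-t)(-t^{-1})=1$'' is exactly backwards: a product equal to $+1$ means the points are swapped by $x\mapsto 1/x$, not by $x\mapsto -1/x$), and it sends ${}^{\sigma}(-\sqrt D)=\sqrt D$ to $-1/\sqrt D$, which again is not a branch point of $\mathcal{C}_{t,g}$. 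The correct map is $x\mapsto 1/x$, which lifts to the paper's isomorphism $f_{\sigma}=[X^{g}Z:(\prod_i\eta_i)YZ^{g}:X^{g+1}]$; with $x\mapsto -1/x$ no lift to an isomorphism exists, so your construction of $h$ breaks down at the start. (A smaller slip: the branch locus consists of the $2g+2$ finite roots only; $\infty$ is not a branch point, since the right-hand side has even degree $2g+2$, and your list would have $2g+3$ points.)

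Second, the technical heart of the theorem --- the computation showing that \emph{neither} $h$ nor $\iota\circ h$ satisfies the cocycle condition --- is only announced (``I expect to find''), not carried out, and the formula you do write for the second case is incorrect. Since ${}^{\sigma}\iota=\iota$ and $h$ conjugates the hyperelliptic involution of ${}^{\sigma}\mathcal{C}_{t,g}$ to $\iota$, one gets $(\iota\circ h)\circ{}^{\sigma}(\iota\circ h)=\iota^{2}\circ h\circ{}^{\sigma}h=h\circ{}^{\sigma}h$, with \emph{two} cancelling factors of $\iota$; your identity $(\iota h)\circ{}^{\sigma}(\iota h)=\iota\,(h\circ{}^{\sigma}h)$ has only one, and combined with $h\circ{}^{\sigma}h=\iota$ it would literally assert that $\iota\circ h$ \emph{is} a valid Weil datum, i.e. the opposite of what you need. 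The actual computation (done in the paper) is short: with $c=\prod_{i=1}^{g-1}\eta_i$ one has ${}^{\sigma}c=\prod_i(-\eta_i^{-1})=(-1)^{g-1}c^{-1}=-c^{-1}$ because $g$ is even, whence $f_{\sigma}\circ{}^{\sigma}f_{\sigma}=\iota\neq 1$, and the commutation argument above shows the same for $\iota\circ f_{\sigma}$. This is precisely where the norm condition $N_{L_D/\mathbb{Q}}(\eta_i)=-1$ and the parity of $g$ enter; until you fix the choice of $h$ and actually perform this sign computation, the non-definability claim is unproved. Your explicit appeal to Corollary~\ref{coro1} to pass from ``no Weil datum for $L_D/\mathbb{Q}$'' to ``not definable over $\mathbb{Q}$'' is correct and is the right way to finish.
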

\begin{proof}
 The $g$-fold branched cover $\pi:\mathcal{C}_{t,g}\rightarrow\widehat{\mathbb{C}}$ has branch values, given by the real points $$-t,-t^{-1},-\sqrt{D},\frac{1}{\sqrt{D}},\pm\eta_1,...,\pm\eta_{g-1}.$$
Hence, any automorphism of $\mathcal{C}_{t,g}$ in the reduced automorphism group $\overline{\operatorname{Aut}(\mathcal{C}_{t,g})}:=\operatorname{Aut}(\mathcal{C}_{t,g})/\langle\iota\rangle$ induces a M\"{o}bius transformation
$$T:x\mapsto\frac{ax+b}{cx+d},$$
leaving invariant the branch locus of $\pi$. One easily checks that our restrictions on $\eta_i's$ imposes $T:t\mapsto t$ or $t^{-1}$ and $\sqrt{D}\mapsto \sqrt{D}$ or $\frac{-1}{\sqrt{D}}$. In particular, there are only finitely
many possibilities for the orbit of $t$ under $T$. Each possibility will provide
a polynomial equation on $t$ in terms of $\sqrt{D}$ and $\eta_i's$. After excluding these finitely many possibilities, we conclude that $\operatorname{Aut}(\mathcal{C}_{t,g})=\langle\iota\rangle$, for infinitely many values of $t\in\mathbb{Q}\setminus\{0,\pm1\}$.

Now, the map
$$f_{\sigma}:\big(X:Y:Z\big)\mapsto\big(X^gZ:\big(\prod_{i=1}^{g-1}\eta_i\big)YZ^g:X^{g+1}\big)$$
defines an isomorphism between $^{\sigma}\mathcal{C}_{t,g}{\stackrel{f_{\sigma}}{\longrightarrow}}\mathcal{C}_{t,g}$ (recall that $g\geq2$ is even). Therefore, the field of moduli is $\mathbb{Q}$. On the other hand, any other isomorphism $f_{\sigma}':\,^{\sigma}\mathcal{C}_{t,g}\rightarrow \mathcal{C}_{t,g}$ equals to $f_{\sigma}$ or $\iota \circ f_{\sigma}$. Since $f_{\sigma}$ and $\iota$ commutes, and $f_{\sigma}\circ\,^{\sigma}f_{\sigma}\neq1$, then we also get $f_{\sigma}'\circ\,^{\sigma}f_{\sigma}'\neq1$. In particular, the Weil's cocylce criterion of decent is not verified, and $\mathbb{Q}$ is not a field of definition for $\mathcal{C}_{t,g}$.
\end{proof}

\section{On automorphism groups of smooth plane curves}
By a smooth plane curve over $\mathbb{C}$ of degree $d\geq4$, we mean a smooth curve $C$ over $\mathbb{C}$, which is $\mathbb{C}$-isomorphic to a non-singular plane model $F_C(X,Y,Z)=0$ in $\mathbb{P}^2_{\mathbb{C}}$, where $F_C(X,Y,Z)$ is a homogenous polynomial of degree $d$ with complex coefficients.

Using elementary algebraic geometry (Riemann-H\"{u}rwitz formula and B\'{e}zout theorem) one shows the following:
\begin{lemma}\label{nonhypergenus}
Let $C\hookrightarrow\mathbb{P}^2_{\mathbb{C}}$ be a smooth plane curve of degree $d\geq4$. Then, it is non-hyperelliptic of genus $g=(d-1)(d-2)/2$.
\end{lemma}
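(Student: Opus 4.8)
The plan is to establish the two assertions separately. For the genus, I would project $C$ from a general point $p \in \mathbb{P}^2_{\mathbb{C}}\setminus C$ onto a line $\ell\cong\mathbb{P}^1_{\mathbb{C}}$; restricting this linear projection to $C$ yields a morphism $\varphi\colon C\to\mathbb{P}^1_{\mathbb{C}}$ of degree $d$, since by B\'ezout a general line through $p$ meets $C$ transversally in $d$ points. A point $q\in C$ is a ramification point of $\varphi$ precisely when the line $\overline{pq}$ is tangent to $C$ at $q$, and counting such $q$ amounts to computing the degree of the dual curve: applying B\'ezout to $C$ and its polar curve of degree $d-1$ with respect to $p$, a general $p$ lies on exactly $d(d-1)$ tangent lines to $C$, each a simple tangent. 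Hence $\varphi$ has $d(d-1)$ branch values, each with a single ramification point of index $2$, and Riemann--H\"urwitz gives $2g-2 = d(2\cdot 0-2)+d(d-1) = d(d-3)$, that is, $g=(d-1)(d-2)/2$.

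For the non-hyperellipticity, I would invoke the adjunction formula: writing $H$ for the hyperplane class on $C$ (the divisor cut by a line, of degree $d$ by B\'ezout), one has $K_C=(K_{\mathbb{P}^2_{\mathbb{C}}}+C)|_C=(d-3)H$. Since $d\geq 4$, the bundle $\mathcal{O}_C((d-3)H)$ is the restriction of $\mathcal{O}_{\mathbb{P}^2_{\mathbb{C}}}(d-3)$, and from the exact sequence $0\to\mathcal{O}_{\mathbb{P}^2_{\mathbb{C}}}(-3)\to\mathcal{O}_{\mathbb{P}^2_{\mathbb{C}}}(d-3)\to\mathcal{O}_C((d-3)H)\to 0$ together with $H^1(\mathbb{P}^2_{\mathbb{C}},\mathcal{O}(-3))=0$, every canonical form on $C$ is the restriction of a plane curve of degree $d-3\geq 1$. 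As degree-one forms already separate points and tangent directions of $\mathbb{P}^2_{\mathbb{C}}$, the canonical map of $C$ is a closed embedding; since the canonical map of a hyperelliptic curve of genus $\geq 2$ is two-to-one onto a rational normal curve, $C$ cannot be hyperelliptic. Alternatively one argues directly with Riemann--Roch: existence of a $g^1_2$ would force every plane curve of degree $d-3$ through a point of $C$ to pass through a second prescribed point of $C$, which is absurd for $d\geq 4$.

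The main obstacle I anticipate is the genericity step in the first part: one must check that for a suitably general center $p$ the line $\overline{pq}$ is tangent to $C$ at no more than one point, is never a flex (hyperflex) tangent, and never has higher-order contact near $p$, i.e. that $p$ avoids the finitely many bad strata associated to the dual curve (its singular locus, bitangent lines, inflectional lines). This transversality bookkeeping, rather than the Riemann--H\"urwitz arithmetic itself, is where care is needed; it can be dispatched by a dimension count on the incidence variety of tangent lines, or bypassed entirely by computing the genus from $2g-2=\deg K_C=d(d-3)$ via the adjunction identity above, which uses only B\'ezout.
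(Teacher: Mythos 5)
Your proposal is correct, but it takes a genuinely different route in both halves, so let me compare. For the genus, the paper computes $\deg K_C$ directly from adjunction and B\'ezout: $K_C\sim (d-3)(H\cap C)$ with $\deg(H\cap C)=d$, giving $2g-2=d(d-3)$ at once --- this is exactly the shortcut you mention at the end; your primary route (projection from a general point plus Riemann--H\"urwitz with $d(d-1)$ simple tangents) also works but carries the transversality bookkeeping you rightly flag, which the adjunction computation avoids entirely. For non-hyperellipticity the two arguments really differ: the paper exhibits the explicit basis $x^ry^s/f_y$, $0\leq r+s\leq d-3$, of regular differentials, notes that for $d=4$ the canonical map is the identity (hence an embedding), and for $d\geq 5$ argues by contradiction, realizing a hypothetical hyperelliptic involution, up to $\operatorname{PGL}_3(\mathbb{C})$-conjugation, as the linear map $[X:Y:-Z]$ and comparing its at most $d$ fixed points on $C$ with the $2g+2$ fixed points a hyperelliptic involution must have. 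Your cohomological argument --- surjectivity of $H^0(\mathbb{P}^2_{\mathbb{C}},\mathcal{O}(d-3))\to H^0(C,K_C)$ from the ideal-sheaf sequence and $H^1(\mathbb{P}^2_{\mathbb{C}},\mathcal{O}(-3))=0$, plus very ampleness of $\mathcal{O}(d-3)$ for $d\geq 4$ --- shows the canonical map is an embedding uniformly in $d\geq 4$, so it avoids both the paper's case split and the implicit use of the fact that the involution acts linearly on $\mathbb{P}^2_{\mathbb{C}}$. One small polish point: what you need is that forms of degree $d-3$ (not merely degree one) separate points and tangent directions along $C$; this is immediate, since a separating linear form can be multiplied by a power of a linear form not vanishing at the points in question, but it is worth saying explicitly.
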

\begin{proof}
Let $H\subset\mathbb{P}^2_{\mathbb{C}}$ be a hyperplane section of $C$. In particular, the canonical divisor $K$ of $C$ is $\sim(d-3)(H\cap C)$. By B\'{e}zout theorem $H\cap C$ has degree exactly $d$. Therefore, Riemann-H\"{u}rwitz reads
$$2g-2=\operatorname{deg}(K)=(d-3)d,$$
that is $g=(d-1)(d-2)/2$.

Next, if $f(x,y)=0$ is the affine equation of a smooth plane curve $C$ of degree $d\geq4$, then
$$\big\{\frac{x^ry^s}{f_y}\,|\,0\leq r+s\leq d-3\big\}$$
is a basis of the space of regular differentials on $C$. Therefore, the canonical map $C\rightarrow\mathbb{P}_{\mathbb{C}}^{g-1}$ can be seen as the map
$$(x:y:1)\mapsto(x^ry^s\,|\,0\leq r+s\leq d-3).$$
In particular, when $d=4$, this map is exactly the identity map, and hence is an embedding. Thus a smooth plane curve $C$ of degree $d=4$ over $\mathbb{C}$ is non-hyperelliptic. Now, assume that $d\geq5$ and $C$ is hyperelliptic. Hence, it has a hyperelliptic involution $\iota$ of order $2$, which fixes exactly $2g+2=(d-4)(d+1)$ points on $C$. Thinking about $\iota$, up to $\operatorname{PGL}_{3}(\mathbb{C})$-conjugation, as the automorphism $[X:Y:-Z]$, gives at most $d$ fixed points on $C$, since $\iota$ leaves invariant in $\mathbb{P}^2_{\mathbb{C}}$, the line $Z=0$, the point $(0:0:1)\notin C$ and no other points. 
That is, $(d-4)(d+1)\leq d$, a contradiction!. That is, a smooth plane curve $C$ over $\mathbb{C}$ of degree $d\geq5$ is always non-hyperelliptic.
\end{proof}
\begin{defn}
For a non-zero monomial $cX^iY^jZ^k$
with $c\in\mathbb{C}\setminus\{0\}$, its exponent is defined to be $max\{i,j,k\}$. For a homogenous polynomial $F(X,Y,Z)$, the core of it is
defined to be the sum of all terms of $F$ with the greatest
exponent. Now, let $C_0$ be a smooth plane curve, a pair $(C,H)$ with
$H\leq \operatorname{Aut}(C)$ is said to be a descendant of $C_0$ if $C$ is defined
by a homogenous polynomial whose core is a defining polynomial of
$C_0$ and $H$ acts on $C_0$ under a suitable change of the
coordinates system, i.e. $H$ is conjugate to a subgroup of
$\operatorname{Aut}(C_0)$.
\end{defn}
By \cite[\S 1-10]{Mit} and the proof of Theorem 2.1 in \cite{Ha}, we conclude:
\begin{theorem}[Mitchell \cite{{Mit}}, Harui \cite{Ha}]\label{Harui,Mitchell}
	Let $G$ be a subgroup of automorphisms of a smooth plane curve $C$ of degree $d\geq4$ defined over $\mathbb{C}$. Then, one of the following holds:
	\begin{enumerate}[(i)]
		\item $G$ fixes a line in $\mathbb{P}^2_{\mathbb{C}}$ and a point off this line.
		\item $G$ fixes a triangle $\Delta\subset\mathbb{P}^2_{\mathbb{C}}$, i.e. a set of three non-concurrent lines, and neither line nor a point is leaved invariant. In this case, $(C,G)$ is a descendant of the the Fermat curve $F_d:\,X^d+Y^d+Z^d=0$ or the Klein curve $K_d:\,XY^{d-1}+YZ^{d-1}+ZX^{d-1}=0$.
		\item $G$ is $\operatorname{PGL}_3(\mathbb{C})$-conjugate to a finite primitive subgroup namely, the Klein group
		$\operatorname{PSL}(2,7)$, the icosahedral group $\operatorname{A}_5$, the alternating group
		$\operatorname{A}_6$, the Hessian group $\operatorname{Hess}_{*}$ with $*\in\{36,72,216\}$.
		\end{enumerate}
\end{theorem}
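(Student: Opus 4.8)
The plan is to reduce the statement to two classical inputs: the classification of finite subgroups of $\operatorname{PGL}_3(\mathbb{C})$ by their action on $\mathbb{P}^2_{\mathbb C}$, and, in the imprimitive case, Harui's analysis of the defining polynomial through its core. First I would record that $G$ may be regarded as a finite subgroup of $\operatorname{PGL}_3(\mathbb{C})$: by Lemma~\ref{nonhypergenus} the genus is $g=(d-1)(d-2)/2\ge 3$, so $\operatorname{Aut}(C)$ is finite, and every automorphism of a smooth plane curve of degree $d\ge4$ is linear --- for $d=4$ because the plane model is the canonical model, and for $d\ge5$ because $K_C\sim(d-3)H$ singles out the hyperplane class $H$, which is therefore $\operatorname{Aut}(C)$-invariant, so that every automorphism is a projectivity. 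This is the input already used in \cite{Ha}.

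Next I would invoke Mitchell's classification \cite[\S1--10]{Mit} of a finite subgroup $G\le\operatorname{PGL}_3(\mathbb C)$ according to its action on $\mathbb{P}^2_{\mathbb C}$, which splits into three mutually exclusive cases. If $G$ fixes a point or a line of $\mathbb{P}^2_{\mathbb C}$, then --- lifting to a projective representation and using Maschke's theorem to split off a complementary invariant subspace --- it fixes a point together with a line missing that point, which is conclusion (i). If $G$ fixes no point and no line but stabilizes a triangle $\Delta$, i.e. it decomposes the underlying space into three lines that it permutes transitively, we are in the setting of (ii). Otherwise $G$ is primitive, and Mitchell's list realizes $G$ as $\operatorname{PGL}_3(\mathbb C)$-conjugate to one of $\operatorname{PSL}(2,7)$, $A_5$, $A_6$ or the Hessian groups $\operatorname{Hess}_{36}$, $\operatorname{Hess}_{72}$, $\operatorname{Hess}_{216}$, which is (iii). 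In cases (i) and (iii) there is nothing further to prove.

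It remains to treat the imprimitive case, following the proof of \cite[Theorem~2.1]{Ha}. I would normalize the invariant triangle to $XYZ=0$, so that $G$ consists of monomial matrices and its image in the quotient $S_3$ permuting the three sides of $\Delta$ is transitive. Let $F_C$ be a defining equation and $F_0$ its core. Since the monomial action of $G$ permutes the monomials of any given exponent among themselves up to scalars, $F_0$ is again a $G$-semi-invariant form; and the key point is that smoothness of $C$ --- tested at the three vertices of $\Delta$, where it forces top-exponent monomials such as $X^d$ or $X^{d-1}Y$ to occur, and along the three sides --- forces the greatest exponent occurring to be $d$ or $d-1$ and, combined with the transitivity of the $S_3$-image and a diagonal rescaling of coordinates, forces $F_0$ to become $X^d+Y^d+Z^d$ or $XY^{d-1}+YZ^{d-1}+ZX^{d-1}$. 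Then $G$ acts on the resulting curve $C_0$ through coordinate changes, so $(C,G)$ is by definition a descendant of $F_d$ or of $K_d$, giving (ii).

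The main obstacle is precisely this last step: carrying out in full the combinatorics showing that a smooth $G$-invariant curve with an invariant triangle has core exactly of Fermat or Klein type --- excluding every other configuration of top-exponent monomials --- is Harui's core analysis and is the only genuinely non-classical ingredient; I would quote it rather than reconstruct it. Steps one and two are standard facts about finite linear groups and about smooth plane curves.
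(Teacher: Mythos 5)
Your outline follows exactly the route the paper itself takes: the paper gives no independent proof but derives the statement from Mitchell's classification \cite[\S 1--10]{Mit} together with the proof of Theorem~2.1 in \cite{Ha}, which are precisely the two inputs you identify (linearity of automorphisms, Mitchell's intransitive/imprimitive/primitive trichotomy, and Harui's core analysis in the triangle case). Your reconstruction of the reduction is sound, and deferring the core-of-the-polynomial combinatorics to Harui is exactly what the paper does.
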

\begin{defn}\label{homologydefn}
By an homology of period $n\in\mathbb{Z}_{\geq1}$, we mean a projective linear transformation of the plane $\mathbb{P}^2_{\mathbb{C}}$, which acts up to $\operatorname{PGL}_3(\mathbb{C})$-conjugation, as $$(X:Y:Z)\mapsto (\zeta_{n}X:Y:Z).$$
Such a transformation fixes pointwise a line (its axis) and a point off this line (its center).
\end{defn}
\begin{theorem}[Mitchell \cite{Mit}]\label{homologies}
Let $G$ be a finite group of $\operatorname{PGL}_3(\mathbb{C})$. If $G$ contains an homology of period $n\geq4$, then it fixes a point, a line or a triangle. Moreover, the Hessian group $\operatorname{Hess}_{216}$ is the only finite subgroup of $\operatorname{PGL}_3(\mathbb{C})$ that contains homologies of period $n=3$, and does not leave invariant a point, a line or a triangle.
\end{theorem}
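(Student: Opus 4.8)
The plan is to deduce the statement from the classification of finite subgroups of $\operatorname{PGL}_3(\mathbb{C})$, so the first step is the standard reduction to primitive groups. Lifting a finite $G\leq\operatorname{PGL}_3(\mathbb{C})$ to a finite subgroup of $\operatorname{GL}_3(\mathbb{C})$ and applying Maschke's theorem to the natural $3$-dimensional representation $V$, exactly one of the following holds: $V$ is reducible and splits as $1\oplus 2$, so $G$ fixes a point and a line; or $V$ splits as $1\oplus 1\oplus 1$, so $G$ leaves invariant a triangle, namely the three coordinate lines; or $V$ is irreducible but imprimitive, so a system of imprimitivity consists of three lines permuted by $G$, again a $G$-invariant triangle; or $G$ is primitive. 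By the classical classification (Mitchell \cite{Mit}; see Theorem~\ref{Harui,Mitchell}(iii)) the primitive finite subgroups of $\operatorname{PGL}_3(\mathbb{C})$ are, up to conjugacy, precisely $\operatorname{PSL}(2,7)$, $\operatorname{A}_5$, $\operatorname{A}_6$, $\operatorname{Hess}_{36}$, $\operatorname{Hess}_{72}$ and $\operatorname{Hess}_{216}$. Thus it remains to prove: (a) none of these six groups contains a homology of period $n\geq 4$; and (b) among them only $\operatorname{Hess}_{216}$ contains a homology of period $3$, and $\operatorname{Hess}_{216}$ fixes no point, line or triangle.

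I would settle (a) and (b) by a direct inspection based on the observation that an element of $\operatorname{PGL}_3(\mathbb{C})$ is a homology of period $n$ (Definition~\ref{homologydefn}) exactly when some --- equivalently every --- matrix lift has eigenvalue multiplicities $(2,1)$ with the ratio of the two eigenvalues a primitive $n$-th root of unity; so whether an element is a homology, and of which period, is read off the multiset of eigenvalues of any lift, i.e.\ from the character of the $3$-dimensional representation (for $\operatorname{A}_6$, of the Valentiner cover $3.\operatorname{A}_6\leq\operatorname{SL}_3(\mathbb{C})$) on the relevant conjugacy class. Each of the six groups has element orders at most $7$ and only a handful of conjugacy classes, so this is a finite check. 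Going through the classes of order $\geq 4$, one finds that the corresponding elements always act with three pairwise distinct eigenvalues --- e.g.\ an order-$7$ element of $\operatorname{PSL}(2,7)$ has eigenvalues $\{\zeta_7,\zeta_7^2,\zeta_7^4\}$ (as for the Klein quartic), an order-$5$ element of $\operatorname{A}_5$ or $\operatorname{A}_6$ has eigenvalues $\{1,\zeta_5,\zeta_5^{-1}\}$ up to a scalar, and the order-$4$ elements of $\operatorname{PSL}(2,7)$, $\operatorname{A}_6$ and the Hessian groups, together with the order-$6$ elements of $\operatorname{Hess}_{216}$, likewise have three distinct eigenvalues --- hence none of them is a homology. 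This proves (a).

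For the order-$3$ elements: in $\operatorname{PSL}(2,7)$, $\operatorname{A}_5$ and $\operatorname{A}_6$ every non-trivial order-$3$ element acts with eigenvalues $\{1,\zeta_3,\zeta_3^2\}$ (the $3$-dimensional character vanishing on those classes), so it is never a homology; and in $\operatorname{Hess}_{36}$ and $\operatorname{Hess}_{72}$ a Sylow $3$-subgroup coincides with the Heisenberg group $E=\langle\,[X:\zeta_3 Y:\zeta_3^2 Z],\ [Y:Z:X]\,\rangle\cong(\mathbb{Z}/3\mathbb{Z})^2$, and a short computation shows that every non-trivial element of $E$ has characteristic polynomial $\lambda^3-1$, hence eigenvalues $\{1,\zeta_3,\zeta_3^2\}$ and again no homology. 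On the other hand $\operatorname{Hess}_{216}$, which may be realized as the normalizer of $E$ in $\operatorname{PGL}_3(\mathbb{C})$, does contain period-$3$ homologies: the transformation $D=[X:Y:\zeta_3 Z]$ normalizes $E$ --- it commutes with $[X:\zeta_3 Y:\zeta_3^2 Z]$ and conjugates $[Y:Z:X]$ into $E$ --- so $D\in\operatorname{Hess}_{216}$, while $D$ fixes pointwise the line $\{Z=0\}$ and the point $(0:0:1)$ off it, and $D^{3}=1\neq D$, i.e.\ $D$ is a homology of period $3$. Finally, $\operatorname{Hess}_{216}$ fixes no point, line or triangle: already $E$ acts irreducibly (so there is no invariant point or line), and $\operatorname{Hess}_{216}$ permutes transitively the four triangular members of the Hesse pencil $\lambda(X^3+Y^3+Z^3)+\mu XYZ$, so it leaves no triangle invariant. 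Combined with the order-$3$ analysis above, this proves (b), and with it the theorem.

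The substance of the argument is the eigenvalue bookkeeping over the six primitive groups; I expect no real difficulty there, since their $3$-dimensional representations and character tables are classical and each has very few conjugacy classes. The only conceptually nontrivial ingredient is the reduction of the first paragraph: there the implication ``$G$ fixes no point, line or triangle $\Rightarrow$ $G$ is primitive'' is a consequence of Maschke's theorem, while the list of primitive subgroups of $\operatorname{PGL}_3(\mathbb{C})$ is the deep classical input, already quoted above as Theorem~\ref{Harui,Mitchell}(iii).
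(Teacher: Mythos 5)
Note first that the paper does not prove this statement at all: it is quoted as a classical result of Mitchell \cite{Mit} (together with the discussion in \S 1--10 of that paper), so there is no in-paper argument to compare with. Your strategy --- reduce via Maschke/imprimitivity to the case of a primitive group, then run a finite eigenvalue check over the six primitive groups $\operatorname{PSL}(2,7)$, $\operatorname{A}_5$, $\operatorname{A}_6$, $\operatorname{Hess}_{36}$, $\operatorname{Hess}_{72}$, $\operatorname{Hess}_{216}$ --- is sound, and the eigenvalue facts you assert (distinct eigenvalues for all classes of order $\geq 4$, character zero on order-$3$ classes of the simple groups, the Sylow-$3$ analysis of $\operatorname{Hess}_{36}$ and $\operatorname{Hess}_{72}$, and $[X:Y:\zeta_3Z]$ normalizing $E$) are all correct. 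Two caveats: much of the ``finite check'' is asserted rather than carried out, which is acceptable for a sketch but is where all the actual work lies; and the classification of primitive finite subgroups of $\operatorname{PGL}_3(\mathbb{C})$ that you invoke is itself Mitchell's theorem (Theorem \ref{Harui,Mitchell}(iii) in the paper is stated only for automorphism groups of smooth plane curves, though the underlying classification is the general one), and Mitchell's own proof of that classification proceeds by analyzing homologies --- so your argument inverts the historical logic; it is not circular only if one takes the classification from an independent source such as Blichfeldt.

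There is one genuine gap in part (b): the claim that $\operatorname{Hess}_{216}$ leaves no triangle invariant. Transitivity on the four triangles of the Hesse pencil only shows that those four particular triangles are not invariant; it says nothing about an arbitrary triangle in $\mathbb{P}^2_{\mathbb{C}}$. (Irreducibility of $E$ handles points and lines, but an irreducible group can perfectly well stabilize a triangle --- that is exactly the imprimitive case.) The fix is short and uses machinery you already set up in your first paragraph: if a group stabilizes a triangle, then it permutes the three vertices, which are in general position, so either the representation is reducible or the three corresponding $1$-dimensional subspaces form a system of imprimitivity; either way the group is not primitive, contradicting the primitivity of $\operatorname{Hess}_{216}$. (Equivalently: the subgroup fixing each side of the triangle has index at most $6$ and is simultaneously diagonalizable, hence abelian of order at least $36$, and $\operatorname{Hess}_{216}$ has no such abelian subgroup.) With that step replaced, and the eigenvalue bookkeeping actually executed, the argument goes through.
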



\section{Non-hyperelliptic real curves not definable over their field of moduli}\label{s3}
For any arbitrary integer $d=4m>0$, fix $\{\eta_1,\eta_2,...,\eta_m\}\subset L_D$, satisfying the following conditions:
\begin{enumerate}[(i)]
  \item (Non-singularity) For $t\in\mathbb{Q}\setminus\{0,\pm1\}$, the form $$f_t(X,Z):=\prod_{i=1}^m\,(X^2-\eta_i^{-2}Z^2)(X+t\eta_i^3Z)(X-t^{-1}\eta_i^3Z)$$ has no repeated zeros.
  \item (Automorphism group) The form $$g(X,Z):=\prod_{i=1}^m\,(X-\frac{\eta_i}{\sqrt{D}}Z)(X-\eta_i\sqrt{D}Z)$$ is not invariant under any automorphism of $\mathbb{P}^1_{\mathbb{C}}$ of the shape
      $$
      \psi_a:(X:Z)\to(Z:aX)\,\,\text{or}\,\,\psi_{a,b}:(X:Z)\to(X+aZ:bX-Z),
      $$
 with $a,b\in\mathbb{C}$.
\item (Weil's datum) The norms $N_{L_D/\mathbb{Q}}(\eta_i)=-1$, for all $i$.
\end{enumerate}

\begin{theorem}\label{familysmoothauto}
Following the above notations, consider the family of Riemann surfaces $\mathcal{S}_{t,d}$, with $d\geq 4$, given in $\mathbb{P}^2_{\mathbb{C}}$ by the equation
\begin{equation}\label{nonhypequation}
F(X,Y,Z):=Y^d+Y^{d/2}g(X,Z)+f_t(X,Z)=0.
\end{equation}
Then, there exist infinitely many $t\in\mathbb{Q}\setminus\{0,\pm1\}$ such that $\mathcal{S}_{t,d}$ is non-hyperelliptic of genus $g=(d-1)(d-2)/2$. The full automorphism group $\operatorname{Aut}(\mathcal{S}_{t,d})$ is $\operatorname{PGL}_3(\mathbb{C})$-conjugate to the cyclic group $\mathbb{Z}/(d/2)\mathbb{Z}$, generated by $[X:\zeta_{d/2}Y:Z]$. Moreover, the field of moduli of $\mathcal{S}_{t,d}$ is $\mathbb{Q}$, but is not a field of definition.
\end{theorem}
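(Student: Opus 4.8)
We follow the blueprint of the proof of Theorem~\ref{familysmoothauto2}, in four steps. \emph{(1) Geometry.} Condition (i) says that $f_t(X,Z)$ is separable, and from this one checks that $F(X,Y,Z)=0$ is non-singular: at a point with $Y=0$ one has $F_Y\equiv 0$, so $F=F_X=F_Z=0$ would (by Euler's identity) force a common zero of $f_t,(f_t)_X,(f_t)_Z$, against separability; the finitely many $t$ admitting a singular point with $Y\neq 0$ form the zero set of a discriminant polynomial in $t$ which is not identically zero, so they are discarded. Lemma~\ref{nonhypergenus} then gives that $\mathcal{S}_{t,d}$ is non-hyperelliptic of genus $(d-1)(d-2)/2$. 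Since $d/2$ is even, $\zeta_{d/2}^{d/2}=\zeta_{d/2}^{d}=1$, so $\rho:=[X:\zeta_{d/2}Y:Z]$ preserves $F$; hence $\rho\in\operatorname{Aut}(\mathcal{S}_{t,d})$, it has order $d/2$, and it is an homology with center $[0:1:0]\notin\mathcal{S}_{t,d}$ and axis $\{Y=0\}$, meeting $\mathcal{S}_{t,d}$ exactly in the $d$ distinct zeros of $f_t$. Thus $\mathbb{Z}/(d/2)\mathbb{Z}\leq\operatorname{Aut}(\mathcal{S}_{t,d})$.

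\emph{(2) The automorphism group} — this is the main obstacle. Assume $d\geq 8$ (the quartic case $d=4$ is dealt with directly via the classification of automorphism groups of smooth plane quartics). Since $\operatorname{Aut}(\mathcal{S}_{t,d})$ contains the homology $\rho$ of period $d/2\geq 4$, Theorem~\ref{homologies} shows it fixes a point, a line, or a triangle, so by Theorem~\ref{Harui,Mitchell} it lies in case (i) or case (ii), the primitive case being excluded. The core of $F$ is $Y^d+X^d+\big(\prod_i\eta_i^{4}\big)Z^d$, which is $\operatorname{PGL}_3(\mathbb{C})$-equivalent to the Fermat curve $F_d$ (and never to the Klein curve), so in case (ii) every automorphism is, in suitable coordinates, a monomial transformation in $\operatorname{Aut}(F_d)$. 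A short analysis of the possible invariant configurations — using that the fixed-point set of $\rho$ on $\mathcal{S}_{t,d}$ lies on $\{Y=0\}$, that $F$ has no monomial $Y^{d-1}Z$, and that $g$ is not fixed by non-diagonal transformations — then shows that every $\alpha\in\operatorname{Aut}(\mathcal{S}_{t,d})$ preserves the line $\{Y=0\}$ and the point $[0:1:0]$. Hence $\alpha\in\operatorname{GL}_{2,Y}(\mathbb{C})$, i.e. $\alpha$ fixes $Y$ and acts as some $M\in\operatorname{GL}_2(\mathbb{C})$ on $(X,Z)$, and (the $Y^d$-coefficient being unchanged) comparing terms in $F\circ\alpha=F$ gives $g\circ M=g$ and $f_t\circ M=f_t$. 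If $M$ is a scalar, $\alpha=\rho^{j}$. Otherwise the induced M\"obius transformation is non-trivial and permutes the zeros of $g$ and of $f_t$; but every involution of $\operatorname{PGL}_2(\mathbb{C})$ has the form $\psi_a$ or $\psi_{a,b}$, excluded by condition (ii), while a non-trivial element of odd order permuting the zeros of both $g$ and $f_t$ imposes algebraic relations between $t$, $\sqrt D$ and the $\eta_i$, hence occurs only for finitely many $t$. Discarding those, $\operatorname{Aut}(\mathcal{S}_{t,d})=\langle\rho\rangle\simeq\mathbb{Z}/(d/2)\mathbb{Z}$.

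\emph{(3) The field of moduli.} Let $\sigma$ generate $\operatorname{Gal}(L_D/\mathbb{Q})$, so condition (iii) gives ${}^{\sigma}\eta_i=-\eta_i^{-1}$ and ${}^{\sigma}\sqrt D=-\sqrt D$. One computes $g(Z,X)=\big(\prod_i\eta_i^{2}\big)\,{}^{\sigma}g(X,Z)$ and $f_t(Z,X)=\big(\prod_i\eta_i^{4}\big)\,{}^{\sigma}f_t(X,Z)$, i.e. the involution $[X:Z]\mapsto[Z:X]$ of $\mathbb{P}^1_{\mathbb{C}}$ carries the zero divisors of ${}^{\sigma}g$ and ${}^{\sigma}f_t$ onto those of $g$ and $f_t$. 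Choosing $\lambda$ with $\lambda^{d/2}=\big(\prod_i\eta_i\big)^{2}$, the linear map $f_\sigma:[X:Y:Z]\mapsto[Z:\lambda Y:X]$ is an isomorphism ${}^{\sigma}\mathcal{S}_{t,d}\to\mathcal{S}_{t,d}$. Since $t\in\mathbb{Q}$, it follows that ${}^{\tau}\mathcal{S}_{t,d}\cong\mathcal{S}_{t,d}$ for every $\tau\in\operatorname{Gal}(\overline{\mathbb{Q}}/\mathbb{Q})$, so the field of moduli of $\mathcal{S}_{t,d}$ is $\mathbb{Q}$.

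\emph{(4) $\mathbb{Q}$ is not a field of definition.} Since $\operatorname{Aut}(\mathcal{S}_{t,d})=\langle\rho\rangle$, every isomorphism ${}^{\sigma}\mathcal{S}_{t,d}\to\mathcal{S}_{t,d}$ equals $\rho^{j}\circ f_\sigma$. One checks that $f_\sigma$ centralizes $\operatorname{Aut}(\mathcal{S}_{t,d})$ (conjugation by $f_\sigma$ fixes each power of $\rho$) and that $f_\sigma\circ{}^{\sigma}f_\sigma=[X:(\lambda\cdot{}^{\sigma}\lambda)Y:Z]$ with $(\lambda\cdot{}^{\sigma}\lambda)^{d/4}=(-1)^{d/4}$, so $f_\sigma\circ{}^{\sigma}f_\sigma$ is a non-trivial power of $\rho$ — this is where condition (iii), together with the presence of exactly $d/4$ of the $\eta_i$, is used. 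As in the proof of Theorem~\ref{familysmoothauto2}, because $f_\sigma$ commutes with $\operatorname{Aut}(\mathcal{S}_{t,d})$ the same non-triviality holds for every $\rho^{j}\circ f_\sigma$, so no Weil descent datum exists; passing to the Galois closure $L/\mathbb{Q}$ of the field generated by the coefficients of $F$, the entries of $f_\sigma$, and $\zeta_{d/2}$, the corresponding obstruction class in $H^{2}(\operatorname{Gal}(L/\mathbb{Q}),\operatorname{Aut}(\mathcal{S}_{t,d}))$ is non-zero, and by Theorem~\ref{Wa} and its constructive form in \cite{HR}, $\mathcal{S}_{t,d}$ is not definable over $\mathbb{Q}$. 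I expect this last non-vanishing to be the genuinely delicate point alongside step (2): unlike the order-two situation of Theorem~\ref{familysmoothauto2}, the cyclic group $\operatorname{Aut}(\mathcal{S}_{t,d})$ is not defined over $\mathbb{Q}$, so the co-cycle must be controlled over the full cyclotomic extension.
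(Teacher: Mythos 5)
Your steps (1)--(3) track the paper's argument closely (smoothness plus Lemma \ref{nonhypergenus}; the homology $\rho=[X:\zeta_{d/2}Y:Z]$ combined with Theorems \ref{homologies} and \ref{Harui,Mitchell}; the isomorphism $f_\sigma=[Z:\lambda Y:X]$ with $\lambda^{d/2}=(\prod_i\eta_i)^2$), but step (4) has a genuine gap. Every isomorphism $f:{}^{\sigma}\mathcal{S}_{t,d}\to\mathcal{S}_{t,d}$ is $f=\rho^{j}\circ f_\sigma$, and since $f_\sigma$ centralizes $\rho$ and ${}^{\sigma}\rho=\rho$, one gets $f\circ{}^{\sigma}f=\rho^{2j}\circ\bigl(f_\sigma\circ{}^{\sigma}f_\sigma\bigr)=[X:\zeta_{d/2}^{2j}\,\lambda\,{}^{\sigma}\lambda\,Y:Z]$. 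So what you must show is not that $f_\sigma\circ{}^{\sigma}f_\sigma$ is a \emph{non-trivial} power of $\rho$, but that it is an \emph{odd} power of $\rho$: the correction $\rho^{2j}$ sweeps out all even powers, and the commutation argument you import from Theorem \ref{familysmoothauto2} does not apply because here $\operatorname{Aut}(\mathcal{S}_{t,d})$ has order $d/2>2$, so the $\rho^{j}$'s do not cancel the way $\iota$ does there. Your own (correct) computation $(\lambda\,{}^{\sigma}\lambda)^{d/4}=(-1)^{d/4}$ says precisely that $\lambda\,{}^{\sigma}\lambda$ is an odd power of $\zeta_{d/2}$ if and only if $d/4$ is odd. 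Hence your argument rules out a Weil datum only for $d\equiv 4\pmod 8$; when $8\mid d$ it shows the opposite at this level --- some $f=\rho^{j}\circ f_\sigma$ satisfies $f\circ{}^{\sigma}f=f_e$ --- so any obstruction must come from elsewhere (e.g.\ the action of $\tau$ on $\zeta_{d/2}$ and the field over which $\lambda$ actually lives), which you do not supply; the closing appeal to a non-zero class in $H^{2}$ is a restatement of the goal, not an argument. The paper at the corresponding point asserts $f\circ{}^{\sigma}f\neq f_e$ for every such $f$ outright; you cannot reach that assertion from what you have written.

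Step (2) also falls short of the paper in two places. For $d=4$, ``dealt with directly via the classification'' is not a proof: the paper's longest computation is exactly this case, excluding $\mathbb{Z}/2\mathbb{Z}\times\mathbb{Z}/2\mathbb{Z}$, $\mathbb{Z}/6\mathbb{Z}$ and $\operatorname{S}_3$ one by one via the fixed points of $\psi$, the $j$-invariant of the quotient elliptic curve, and a bitangent/MAGMA computation for specific $D,\eta,t$. In case (ii), excluding the Klein descendant because ``the core of $F$ is equivalent to Fermat'' is not valid reasoning: the core depends on the chosen coordinates and defining polynomial, while Harui's descendant condition refers to a suitable coordinate change; the paper instead excludes $K_d$ because $d/2$ does not divide $|\operatorname{Aut}(K_d)|=3(d^2-3d+3)$. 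Likewise, the claim that ``a short analysis shows every automorphism preserves $\{Y=0\}$ and $[0:1:0]$'' replaces the actual work: the paper conjugates into $\operatorname{Aut}(F_d)$, shows the image in $\operatorname{S}_3$ has order at most $2$ and is generated by an element of type $[\zeta_d^{m}Y:\zeta_d^{n}X:Z]$, and then discards finitely many $t$. Finally, in case (i) your dichotomy ``involution or odd order'' omits elements of even order greater than two; this is repairable (such an element powers to an involution preserving the same zero sets, which condition (ii) forbids), but it should be said. As written, steps (2) and (4) do not yet constitute a proof.
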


\begin{proof}
First, we show that the Riemann surface $\mathcal{S}_{t,d}$ is non-hyperelliptic of genus $g=(d-1)(d-2)/2$, for any $t\in\mathbb{Q}\setminus\{0,\pm1\}$. By Lemma \ref{nonhypergenus}, it suffices to see that the equation (\ref{nonhypequation}) has no singular points in $\mathbb{P}^2_{\mathbb{C}}$. Since $F(X,Y,0)=Y^d+(XY)^{\frac{d}{2}}-X^d=0$ has no repeated zeros, the common zeros of $F(X,Y,0)$ and $\frac{\partial F}{\partial X}(X,Y,0)$ do not exist. Furthermore, $\frac{\partial F}{\partial X}(X,Y,1)=Y^{\frac{d}{2}}g'(X,1)-f_t'(X,1)$ and $\frac{\partial F}{\partial Y}(X,Y,1)=\frac{d}{2}Y^{\frac{d}{2}-1}(2Y^{\frac{d}{2}}+g(X,1))$. But $f_t(X,Z)$ is square free, then $(X:0:1)$ gives no singularities on $F(X,Y,Z)=0$. Also, if we substitute into $F(X,Y,1)=0$, we get that $F(X,Y,Z)=0$ is singular only if $g(X,1)^2=-4f_t(X,1)$, 
which is not possible because $f_t(X,Z)$ is again square-free. So, the equation produces a non-singular plane model of $\mathcal{S}_{t,d}$ over $\mathbb{C}$ of degree $d\geq4$. In particular, $\mathcal{S}_{t,d}$ is non-hyperelliptic of genus $g=\frac{1}{2}(d-1)(d-2)$, using Lemma \ref{nonhypergenus}.

Next, assume that the claim on $\operatorname{Aut}(\mathcal{S}_{t,d})$ is true. Consider the Galois extension $\mathbb{L}:=L_D(\zeta_{d/2})$ of $\mathbb{Q}$, where $\mathcal{S}_{t,d}$ and all of its automorphisms are defined. More precisely, $\mathcal{S}_{t,d}$ is defined over $L_D=\mathbb{Q}(\sqrt{D})$, whereas $\operatorname{Aut}(\mathcal{S}_{t,d})$ is defined over the cyclotomic extension $\mathbb{Q}(\zeta_{d/2})$. Let $\tau$ be a generator of $\operatorname{Gal}(\mathbb{Q}(\zeta_{d/2})/\mathbb{Q})$, hence the group $\operatorname{Gal}(\mathbb{L}/\mathbb{Q})=\langle\sigma,\tau\rangle$. Moreover, one easily checks that $\mathcal{S}_{t,d}$ is isomorphic to its conjugates $^{\sigma}\mathcal{S}_{t,d}$ and $^{\tau}\mathcal{S}_{t,d}=\mathcal{S}_{t,d}$ via $f_{\sigma}=[Z:\alpha Y:X]$ and $f_{\tau}:=1$, where $\alpha^{d/2}=(\prod_{i=1}^m\,\eta_i)^2$. Hence, $\mathbb{Q}$ is the field of moduli for $\mathcal{S}_{t,d}$ relative to $\mathbb{L}/\mathbb{Q}$. However, it is not a field of definition for $\mathcal{S}_{t,d}$. To see this, let $f:\,^{\sigma}\mathcal{S}_{t,d}\rightarrow \mathcal{S}_{t,d}$ be any other isomorphism. Then $f_{\sigma}\circ f^{-1}\in\operatorname{Aut}(\mathcal{S}_{t,d})$, and so $f=f_{\sigma}\circ[X:\zeta_{d/2}Y:Z]^{m}$, for some integer $0\leq m<d/2$. Any such $f$ does not satisfy Weil's condition of descent (see section \S \ref{s2}), since $f\circ\,^{\sigma}f=[Z:\zeta_{d/2}^m\alpha Y:X]\neq f_{e}$, where $e$ is the trivial automorphism of $\mathbb{L}$. Therefore, $\mathbb{Q}$ is not a field of definition for $\mathcal{S}_{t,d}$.

Lastly, it remains to prove our claim on $\operatorname{Aut}(\mathcal{S}_{t,d})$. We consider the following two cases:

\noindent (Case $d\not=4$).
We use quite similar techniques as the ones in \cite{BaBa1, BaBa3, BaBa2}. It is clear that $\psi:=[X:\zeta_{d/2}Y:Z]\in\operatorname{Aut}(\mathcal{S}_{t,d})$ is an homology of order $d/2\geq4$ (Definition \ref{homologydefn}). Therefore, $\operatorname{Aut}(\mathcal{S}_{t,d})$ fixes a point, a line or a triangle, by Theorem \ref{homologies}. In particular, it is not conjugate to any of the finite primitive group mentioned in Theorem \ref{Harui,Mitchell}-(iii). Now, we treat each of the following subcases:
\begin{enumerate}[(i)]
\item A line $L\subset\mathbb{P}^{2}_{\mathbb{C}}$ and a point $P\notin L$ are leaved invariant: By Theorem 2.1 in \cite{Ha}, we can think about $\operatorname{Aut}(\mathcal{S}_{t,d})$ in a short exact sequence
\scriptsize
$$
\xymatrix
{
1\ar[r]  & \mathbb{C}^*\ar[r]                    & \operatorname{GL}_{2,Y}(\mathbb{C})\ar[r]^{\varrho}& \operatorname{PGL}_2(\mathbb{C})\ar[r]& 1         \\
         &                              &                                        &                              &\\
1\ar[r]  & \langle\psi\rangle\ar[r]\ar@{^{(}->}[uu] & \operatorname{Aut}(\mathcal{S}_{t,d})\ar[r]\ar@{^{(}->}[uu] & G\ar[r]\ar@{^{(}->}[uu]& 1
}
$$
\normalsize
where $G$ is conjugate to a cyclic group $\mathbb{Z}/m\mathbb{Z}$ of
order $m\leq d-1$, a Dihedral group $\operatorname{D}_{2m}$ of order $2m$
with $m|(d-2)$, one of the alternating groups $\operatorname{A}_4$, $\operatorname{A}_5$, or to
the symmetry group $\operatorname{S}_4$. Any such $G$, which is not cyclic, contains an element of order $2$. Let $\psi'\in\operatorname{Aut}(\mathcal{S}_{t,d})$ such that $\varrho(\psi')$ has order $2$. Then, $\varrho(\psi')$ has the shape $\psi_a$ or $\psi_{a,b}$ for some $a,b\in\mathbb{C}\setminus\{0\}$, which is absurd by our assumptions on $g(X,Z)$. Consequently, $G=\varrho(\operatorname{Aut}(\mathcal{S}_{t,d}))$ is cyclic, generated by the image of a specific $\psi_G\in \operatorname{GL}_{2,Y}(\mathbb{C})$. In the worst case, this would lead to a polynomial expression of $t$ in terms of the $\eta_i's$, which associates only finitely many values for $t$ with $|G|>1$. So we still have infinitely many $t\in\mathbb{Q}\setminus\{0,\pm1\}$ such that $f_t(X,Z)$ not $\langle\varrho(\psi_G)\rangle$-invariant. In particular, $|G|=1$ and $\operatorname{Aut}(\mathcal{S}_{t,d})$ is $\operatorname{PGL}_3(\mathbb{C})$-conjugate to $\langle[X:\zeta_{d/2}Y:Z]\rangle$.
\item A triangle $\Delta$ is fixed by $\operatorname{Aut}(\mathcal{S}_{t,d})$ and neither a line nor a point is leaved invariant: It follows by the proof of Theorem 2.1 in \cite{Ha} that $(\mathcal{S}_{t,d},\operatorname{Aut}(\mathcal{S}_{t,d}))$ should be a descendant of the Fermat curve $F_d$ or the Klein curve $K_d$ as in Theorem \ref{Harui,Mitchell}. Note that $d/2$ does not divide $|\operatorname{Aut}(K_d)|=3(d^2-3d+3)$,
    cf. \cite[Propositions 3.5]{Ha}. Therefore, $(\mathcal{S}_{t,d},\operatorname{Aut}(\mathcal{S}_{t,d}))$ is not a descendant of $K_d$. Hence, $\exists \phi\in\operatorname{PGL}_3(\mathbb{C})$ where $H:=\phi^{-1}\operatorname{Aut}(\mathcal{S}_{t,d})\phi\leq\operatorname{Aut}(F_d)$. It is also well known (cf. \cite[Proposition 3.3]{Ha}) that $\operatorname{Aut}(F_d)$ is a semidirect product of $\operatorname{S}_3=\langle T:=[Y:Z:X],R:=[X:Z:Y]\rangle$ acting on $(\mathbb{Z}/d\mathbb{Z})^2=\langle[\zeta_dX:Y:Z],[X:\zeta_dY:Z]\rangle.$ Thus any element of $\phi^{-1}\operatorname{Aut(\mathcal{S}_{t,d})}\phi$ has the shape $DR^iT^j$, for some $0\leq i\leq1$ and $0\leq j\leq2$ and $D$ is of diagonal shape in $\operatorname{PGL}_3(\mathbb{C})$. It is straightforward to check that any $DT^j$ and $DRT^j$ with $j\neq0$ has order $3<d/2$. Thus $\phi^{-1}\psi\phi$ has also a diagonal shape, and then we may take $\phi$ in the normalizer of $\langle\psi\rangle$, up to a change of variables in $\operatorname{Aut}(F_d)$.  In this case, we can think about $\operatorname{Aut}(\mathcal{S}_{t,d})$ in the commutative diagram
    \scriptsize
    $$
    \xymatrix
    {
     1\ar[r]  & (\mathbb{Z}/d\mathbb{Z})^2\ar[r]                    & \operatorname{Aut}(F_d)\ar[r]^{\varrho}& \operatorname{S}_3\ar[r]& 1         \\
         &                              &                                        &                              &\\
    1\ar[r]  & \operatorname{Ker}(\varrho|_{H})=\langle\psi\rangle\ar[r]\ar@{^{(}->}[uu] & H\ar[r]\ar@{^{(}->}[uu] & G:=\operatorname{Im}(\varrho|_{H})\ar[r]\ar@{^{(}->}[uu]& 1
    }
    $$
\normalsize
The variable $Y$ in the transformed defining equation via $\phi$ appears exactly as the original equation in the statement. Hence, $G$ is at most cyclic of order $2$, since otherwise $H$ must have an element of the shape $[\zeta_d^mY:\zeta_d^nZ:X]$ or $[\zeta_d^mZ:\zeta_d^nX:Y]$, for some integers $m,n$, which is not possible. For the same reason, $G$ is then generated by a certain $\varrho([\zeta_d^mY:\zeta_d^nX:Z])$, and as before, it only requires to exclude finitely many $t$ such that $f_t(\phi(X,Z))$ is not $\langle\varrho([\zeta_d^mY:\zeta_d^nX:Z])\rangle$-invariant, where $\phi$ is the restriction of $\phi$ on $\mathbb{C}[X,Z]$.
\end{enumerate}

\noindent (Case $d=4$). We use the method applied in \cite[Lemma 4.2]{Aq}, with $D=2$, $\eta=1+\sqrt{2}$ and $t=3$, in order to show that $\operatorname{Aut}(\mathcal{S}_{t,d=4})=\langle[X:-Y:Z]\rangle$.  In this case, the defining equation of $\mathcal{S}_{t,d=4}$ in $\mathbb{P}^2_{\mathbb{C}}$ reduces to
$$Y^{4}+Y^{2}(X-\dfrac{\eta}{\sqrt{D}}Z)(X-\eta\sqrt{D}Z)+(X^{2}-\eta^{-2}Z^{2})(X+t\eta^{3}Z)(X-t^{-1}\eta^{3}Z).$$
%
Then, if $\Aut(\mathcal{S}_{t,d=4})\leq{\rm PGL}_{3}({\mathbb C})$ contains properly $\langle\psi\rangle$, then by \cite[pag. 26]{ba}, $\Aut(\mathcal{S}_{t,d=4})$ contains a subgroup isomorphic to either  ${\mathbb Z}/{2}\mathbb{Z} \times{\mathbb Z}/{2}\mathbb{Z}$, ${\mathbb Z}/{6}\mathbb{Z}$ or $\operatorname{S}_{3}$. We will now exclude each of these cases.

The first case can be excluded because an explicit computation shows that there is no involution, except $\psi$, which preserves the four fixed points of $\psi$ on $\mathcal{S}_{t,d=4}$.

The second case can be excluded because if  $\Aut(\mathcal{S}_{t,d=4})$ contains a cyclic subgroup of order $6$ generated by $\phi$ with $\psi=\phi^{3}$, then the automorphism $\vartheta=\phi^{2}$ induces an order three automorphism $\bar{\vartheta}$ on the elliptic curve $E:=\mathcal{S}_{t,d=4}/\langle\psi\rangle$ having fixed points. This is a contradiction, since the curve $E$ (whose equation can be obtained replacing $Y^{2}$ with $Y$ in the equation of $\mathcal{S}_{t,d=4}$) has $j$-invariant distinct from zero.

Finally, suppose that $\Aut(\mathcal{S}_{t,d=4})$ contains a subgroup $\langle\psi, \phi\rangle$ isomorphic to $\operatorname{S}_{3}$. Here we will apply a method suggested by F. Bars \cite{ba}. By \cite[Theorem 29]{ba}, up to a change of coordinates the equation of $\mathcal{S}_{t,d=4}$ takes the following form:
$$(u^{3}+v^{3})w+u^{2}v^{2}+ruvw^{2}+sw^{4}=0,$$ and the generators of $\operatorname{S}_{3}$ with respect to the coordinates $(u, v, w)$ are
 \[ \theta:=\left( \begin{array}{ccc}
0 & 1 & 0 \\
1 & 0 & 0 \\
0 & 0 & 1 \end{array} \right), \quad \vartheta:=\left( \begin{array}{ccc}
\zeta_3 & 0 & 0 \\
0 & \zeta_3^2 & 0 \\
0 & 0 & 1 \end{array} \right).\]
Thus there exists $A\in \PGL_3(\mathbb{C})$ such that $A\theta A^{-1}=\psi, A\vartheta A^{-1}=\phi.$
 The first condition implies that $A$ is an invertible matrix of the following form
 \[ A=\left( \begin{array}{ccc}
a & a & c \\
e & -e & 0 \\
b & b & l \end{array} \right).
\]

Note that $\mathcal{S}_{t,d=4}$ has exactly four bitangents $X=s_jZ,\ j=1,2,3,4$ invariant under the action of the involution $\psi$,
where $s_j$  are the zeros of
$$P(X)=(X-\frac{1+\sqrt{2}}{\sqrt{2}})^2(X+\frac{\sqrt{2}}{1-\sqrt{2}})^{2}-$$ $$-4(X+(1-\sqrt{2}))(X+\frac{1}{1+\sqrt{2}})(X-3(7+5\sqrt{2}))(X+\frac{1}{3(7-5\sqrt{2})}).
$$

Let $b_{j1}=(s_j,q_j,1), b_{j2}=(s_j,-q_j,1)$ be the two tangency points of the line $X=s_jZ$.
On the other hand, observe that the line $w=0$ is invariant for $\theta$ and it is bitangent to $\mathcal{S}_{t,d=4}$ at $p_1=(1:0:0), p_2=(0:1:0)$.
Thus, for some $j$ we must have $\{Ap_1,Ap_2\}=\{b_{j1}, b_{j2}\}$,
 from which we get $a=s_j b$ and $e=\pm q_j b$. By means of these remarks and using MAGMA \cite{Magma} (see also \cite{HQArxiv}), we may see that  $\phi=A\vartheta A^{-1}$ is not an automorphism of $\mathcal{S}_{t,d=4}$.

This shows the claim on $\operatorname{Aut}(\mathcal{S}_{t,d})$, and we are done.
\end{proof}

\begin{remark}
The choice of $\eta_i's$ in Theorem \ref{familysmoothauto} such that the zero set of $g(X,Y)$ is not preserved under the action of any $\psi_a$ or $\psi_{a,b}$, is not restrictive as it looks. It only imposes finitely many algebraic conditions on the $\eta_i's$: For instance, any $\psi_{a}$ acts as a product of pairwise disjoint $2$-cycles on the set $\{(\eta_i\sqrt{D}:1),(\frac{\eta_i}{\sqrt{D}}:1)\}_i$ since it has order $2$ in $\operatorname{PGL}_{2}(\mathbb{C})$. Hence, if $\psi_a:(\eta_n\sqrt{D}:1)\leftrightarrow(\eta_m\sqrt{D}:1)$ (resp. $(\frac{\eta_m}{\sqrt{D}}:1)$) for some $n,m$, then $a=\frac{1}{\eta_n\eta_mD}$ (resp. $\frac{1}{\eta_n\eta_m}$). Therefore, it suffices to choose the $\eta_i's$ such that $\{\eta_i\}_{i}\neq\{\frac{\eta_n\eta_m}{\eta_i}\},\{\frac{\eta_n\eta_m}{\eta_iD}\}_i$ for any $m,n$. In this case, $g(X,Y)$ is not $\psi_a$-invariant for any $a\in\mathbb{C}$.

The action of an $\psi_{a,b}$ can be treated in the same way. However, it is a bit more tedious, and we skip it for simplicity.
\end{remark}


\bibliographystyle{amsplain}

\begin{thebibliography}{9}

\bibitem{Aq}
M. Artebani, S. Quispe.
{\em Fields of moduli and fields of definition of odd signature curves}.
Archiv der Mathematik {\bf 99} (2012), 333--343.


\bibitem{Achq}
M. Artebani, M. Carvacho, R.A. Hidalgo, S. Quispe.
{\em A tower of Riemann surfaces which cannot be defined over their field of moduli}, Glasgow Mathematical Journal 59 (2017), no. 2, 379--393.

\bibitem{BaBa1}
E. Badr, F. Bars.
\emph{On the locus of smooth plane curves with a fixed automorphism
group}.
Mediterr. J. Math. \textbf{13} (2016), 3605-3627. doi:\,10.1007/s00009-016-0705-9.

\bibitem{BaBa3}
E. Badr,  F. Bars.
\emph{Automorphism groups of non-singular plane curves of degree 5}.
Commun. Algebra \textbf{44} (2016), 327-4340. doi:\,10.1080/00927872.2015.1087547.

\bibitem{BaBa2}
E. Badr and F. Bars.
\emph{Non-singular plane curves with an element of ``large" order in its automorphism group}.
Int. J. Algebra Comput. \textbf{26} (2016), 399-434. doi:\,10.1142/S0218196716500168.


\bibitem{ba}
F. Bars.
{\em Automorphism groups of genus 3 curves}.
Number Theory Seminar UAB-UB-UPC on Genus 3 curves. Barcelona, January 2005.

 \bibitem{Magma}
W. Bosma, J. Cannon, C. Playoust.
{\em  The {M}agma algebra system. {I}. {T}he user language},
Computational algebra and number theory (London, 1993),
   J. Symbolic Comput., 24 (1997), 3-4, 235--265.

\bibitem{Earle}
C. J. Earle.
{\em On the moduli of closed Riemann surfaces with symmetries}.
{ Advances in the Theory of Riemann Surfaces} (1971), 119--130. Ed. L.V. Ahlfors et al.
(Princeton Univ. Press, Princeton).

\bibitem{Ha}
T. Harui. \emph{Automorphism groups of plane
curves}. http://arxiv.org/abs/1306.5842v2.


\bibitem{Hid}
R. A. Hidalgo.
{\em Non-hyperelliptic Riemann surfaces with real field of moduli but not definable over the reals}.
{ Archiv der Mathematik} {\bf 93} (2009), 219--222.


\bibitem{HQArxiv}
R. A. Hidalgo and S. Quispe.
Riemann surfaces defined over the reals.
http://arxiv.org/abs/1508.03029

\bibitem{HR}
R. A. Hidalgo and S. Reyes-Carocca.
{\em A constructive proof of Weil's Galois descent theorem}.
http://arxiv.org/abs/1203.6294


\bibitem{Hu1}
 B. Huggins.
{\em Fields of moduli of hyperelliptic curves},
Ph.D. Thesis, University of California, Berkeley, (2005). 

\bibitem{Hu2}
 B. Huggins.
{\em Fields of moduli of hyperelliptic curves}.
{ Math. Res. Lett.} {\bf 14} (2007), 249--262.



 \bibitem{Koi}
S. Koizumi.
{\em Fields of moduli for polarized abelian varieties and for curves},
Nagoya Math. J. {\bf 48} (1972), 37--55.

\bibitem{Kontogeorgis}
A. Kontogeorgis.
{\em Field of moduli versus field of definition for cyclic covers of the projective line}.
{ J. de Theorie des Nombres de Bordeaux} {\bf 21} (2009) 679--692.

\bibitem {Mit}
H. Mitchell.
\emph{Determination of the ordinary and modular ternary linear groups}. Trans.
Amer. Math. Soc. \textbf{12}, no. 2 (1911), 207-242.

\bibitem{Shimura}
G. Shimura.
{\em On the field of rationality for an abelian variety}.
{ Nagoya Math. J.} {\bf 45} (1971), 167--178.

\bibitem{Silhol}
R. Silhol.
Moduli problems in real algebraic geometry.
Real Algebraic Geometry (1972), 110--119.
Ed. M. Coste et al. (Springer-Verlag, Berlin).


\bibitem{We}
A. Weil.
{\em The field of definition of a variety}.
{ Amer. J. Math.} {\bf 78} (1956), 509--524.


 \bibitem{Wolfart}
J.  Wolfart.
{\em $ABC$ for polynomials, dessins d'enfants and uniformization---a survey}. Elementare und analytische Zahlentheorie, 313--345, Schr. Wiss. Ges. Johann Wolfgang Goethe Univ. Frankfurt am Main, 20, Franz Steiner Verlag Stuttgart, Stuttgart, 2006.


\end{thebibliography}

\end{document}